\newtheorem{lemma}{Lemma}
\newtheorem{theorem}{Theorem}
\begin{document}
\title{Improved Bounds for Multicovering Hypergraphs}
\author[1]{Anand Babu}
\author[2]{Sundar Vishwanathan}
\affil[1]{Department of Computer Science {\&} Engineering \\ NIT Calicut}
\affil[2]{Department of Computer Science {\&} Engineering \\ IIT Bombay}
\date{}
\maketitle
\begin{abstract}
The minimum number of bicliques needed to cover the edge set of the complete graph on $n$ vertices is $\lceil \log_2 n \rceil$. The Graham-Pollak theorem states that at least $n-1$ bicliques are required to partition the edge set of the complete graph on $n$ vertices. In this paper, we provide improvements for the generalizations of coverings of graphs and hypergraphs for some specific multiplicities. We also study an extension of the Katona-Szemer\'edi theorem to $r$-uniform hypergraphs.

\end{abstract}
\section{Introduction}
An $r$-uniform hypergraph $H$ (also referred to as an $r$-graph) is said to be $r$-partite if its vertex set $V(H)$ can be partitioned into sets $V_1, V_2, \cdots, V_r$, so that every edge in the edge set $E(H)$ of $H$ intersects $V_i$ in one vertex. An $r$-partite cover of an  $r$-uniform hypergraph $H(V, E)$ is a collection of complete $r$-partite $r$-graphs such that each edge in the edge set $E$ is contained in some complete $r$-partite $r$-graphs. An $r$-partite partition of an $r$-uniform hypergraph $H(V, E)$ is a pair-wise disjoint collection of complete $r$-partite $r$-graphs such that each edge in the edge set $E$ is present in some complete $r$-partite $r$-graph. The minimum size of the collection of complete $r$-partite $r$-graphs that partitions the edge set of an $r$-uniform hypergraph $H$ is represented by $f_r(H)$. The complete $r$-uniform hypergraph with $n$ vertices has an edge set consisting of all $r$-sized subsets of $[n]$. For a complete $r$-uniform hypergraph on $n$ vertices, the minimum size of the collection of complete $r$-partite $r$-graphs that partition their edge set is denoted by $f_r(n)$.
\paragraph{}
The problem of determining $f_r(n)$ for $r>2$, was proposed by Aharoni and Linial \cite{alon1986decomposition}. For $r=2$, $f_2(n)$ is the minimum number of biclique subgraphs required to partition the edge set of the complete graph on $n$ vertices. Graham and Pollak(\hspace{1sp}\cite{graham1971addressing,graham1972embedding} see also \cite{babai1988linear} and \cite{graham1978distance}) proved that at least $n-1$ bicliques are required to partition the edge set of the complete graph $K_n$. Since the edges of the complete graph $K_n$ can be partitioned into $n-1$ disjoint bicliques, this shows that $f_2(n)=n-1$. The original proof by Graham and Pollak uses Sylvester’s law of inertia \cite{graham1972embedding}. Other proofs of the same were found by Tverberg \cite{tverberg1982decomposition}, Peck \cite{peck1984new}, and Vishwanathan \cite{vishwanathan2008polynomial} using linear algebraic methods. A combinatorial proof was given by
Vishwanathan \cite{vishwanathan2013counting}. The generalizations of the Graham Pollak theorem were provided by Alon \cite{alon1986decomposition} and showed that $f_3(n)=n-2$ and $f_r(n)=\theta(n^{\lfloor \frac{r}{2} \rfloor})$. Cioab{\u{a}}, K{\"u}ndgen and Verstra{\"e}te  \cite{cioabua2009decompositions} and later Cioab{\u{a}} and Tait \cite{cioabua2013variations} provided improvements in the lower order terms of $f_r(n)$. Later Leader, Mili{\'c}evi{\'c} and Tan \cite{leader2017decomposing} provided improved constructive bounds on $f_r(n)$. Further improvements on $f_r(n)$ are provided in \cite{leader2018improved}  \cite{babu2019bounds}. 
\paragraph{}
Alon \cite{alon1997neighborly} provided a one-to-one correspondence between $p$-neighborly family of standard boxes in $\mathbb{R}^d$ and the bipartite covering of a complete graph of cardinality $d$ such that each edge in the edge set of the complete graph is contained in at least one biclique and at most $p$ bicliques. Alon also provided bounds for the minimum number of bicliques required to cover the edges of a complete graph such that the edges are covered at least once and at most $p$ times. Multicovering the edge set in complete $r$-uniform hypergraphs is discussed in \cite{babu2021multicovering}.
\paragraph{}
The order of a hypergraph $H(V, E)$ is the number of vertices present in the hypergraph $H$. For an $r$-uniform hypergraph $H(V, E)$, the minimum of the sum of the orders of the complete $r$-partite $r$-graphs in a collection over all the collections of complete $r$-partite $r$-graphs that cover the edge set $E$ is denoted by $b_r(H)$. A classical theorem relating the biclique partition and the sum of the orders of the biclique cover is due to Katona and Szemer\'edi \cite{katona1967problem}. The Katona-Szemer\'edi theorem states that the minimum over the sum of the orders of a collection of bicliques that cover the edge set of a complete graph on $n$ vertices is $n\log n$. The generalization of the theorem for graphs with chromatic number $\chi$ was provided by Mubayi and Vishwanathan \cite{mubayi2009bipartite}.
\paragraph{}
This paper presents an extension of the Katona-Szemer\'edi theorem and some generalizations of the Graham-Pollak theorem. In Section $2$, we provide improved bounds for the minimum number of bicliques required to cover the edges of a complete graph such that the number of times the edges get covered is contained in some specific list of positive integers. Similar results on the minimum number of complete $3$-partite $3$-graphs required to cover the edge set of a complete $3$-uniform hypergraph such that the number of occurrences of the edges belongs to a specific list are also provided. In Section $3$, we provide an extension of the Katona-Szemer\'edi theorem to $r$-uniform hypergraphs by providing a lower bound for the sum of the orders of the collection of complete $r$-partite $r$-graphs that cover the edge set of an $r$-uniform hypergraph in terms of the chromatic number of the $r$-uniform hypergraph.

\section{Multicovering graphs}
\paragraph{}
Let $L=\{l_1,l_2,\cdots,l_i\}$ where $l_1,l_2,\cdots,l_i$ are positive integers. An \emph{L-biclique covering} of a graph $G$ is a collection of bicliques such that every edge of $G$ is contained in $l_i$ of the bicliques for $l_i \in L$. The $L$-biclique covering number, denoted by $bp_L(G)$, is the minimum number of bicliques in an $L$-biclique covering. This definition can be naturally extended to hypergraphs. Here, the definition is specified for complete $r$-uniform hypergraphs for specified lists only. Let $[p] = \{1, 2, \cdots, p\}$. An $r$-partite $p$-multicover of a complete $r$-uniform hypergraph $K_n^{r}$ is a collection of complete $r$-partite $r$-graphs such that every hyperedge of $K_n^{r}$ is contained in $t$ of the $r$-partite $r$-graphs for some $t \in [p]$. The minimum size of such a cover is called the $r$-partite $p$-multicovering number and is denoted by $f_r(n,p)$. Note that $bp_L(K_n)$ where the list $L=\{1,2,\cdots,p\}$ is the same as $f_2(n,p)$.
\paragraph{}
The problem of bipartite $p$-multicovering of the complete graph $K_n$ on $n$ vertices was first studied by Alon \cite{alon1997neighborly}. Alon proved that $(1+o(1))\bigg(\frac{p!}{2^p}\bigg)^{1/p}n^{1/p} \leq f_2(n,p) \leq (1+o(1))p n^{1/p}$. Though the bounds are asymptotically tight, there is still a constant gap between the bounds. Huang and Sudakov \cite{huang2012counterexample} improved the lower bound for $f_2(n,p)$ to $(1+o(1))\bigg(\frac{p!}{2^{p-1}}\bigg)^{1/p}n^{1/p} \leq f_2(n,p).$ Cioab{\u{a}} and Tait \cite{cioabua2013variations} provided a lower bound for $bp_L(G)$ for any list $L$ and graph $G$. They also provided constructive $L$-partite cover of $K_n$ for some specified lists, such as $L=\{1,2\},\{2,3\},\{1,2,4\},\{2,4,\cdots,2i\}$. Despite being asymptotically tight, not many lists are known for which the values are exactly known. It is part of folklore that for $L=\{1,2,\cdots,\lfloor \log n \rfloor$\}, $bp_L(K_n)$ is $\lceil \log_2 n\rceil$. Radhakrishnan, Sen, and Vishwanathan \cite{radhakrishnan2000depth} give another list for which the $bp_L(K_n)$ is exactly known. They show that $bp_L(K_n)=\frac{n}{2}$ for infinitely many values of even $n$ when $L$ is the list of odd numbers less than $n$. They also give similar results for $bp_L(K_n)$ a list of numbers congruent $1 (mod p)$ where $p$ is prime. For $L=\{\lambda\}$ for a constant $\lambda$, De Caen, Gregory, and Pritikin conjectured that $bp_L(K_n)=n-1$, for large $n$. This is known to be true for $\lambda \leq 18$. The bounds on $r$-partite $p$-multicovering of complete $r$-uniform hypergraphs are provided in \cite{babu2021multicovering}.
\subsection{Constructive upper bound for $bp_{\{2,3\}}(K_n)$}
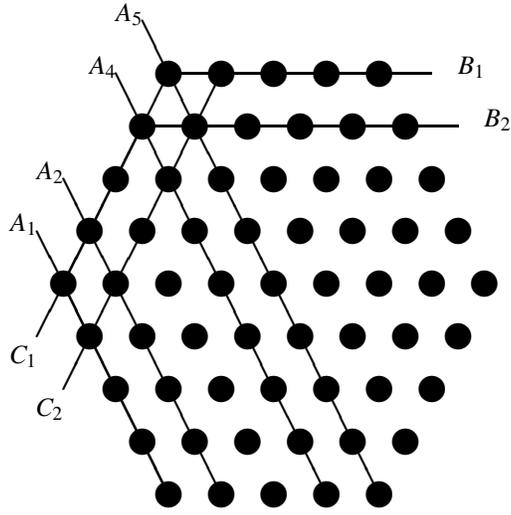
\begin{figure}[ht]
\centering
\begin{minipage}{5.5in}
\setlength{\unitlength}{0.7cm}
\begin{picture}(14,9)
\thicklines
\put(7,8){\circle*{0.5}}
\put(7,8){\line(1,0){5}}
\put(12.5,8){$B_1$}
\put(7,8){\line(-1,-2){2.5}}
\put(4,2.5){$C_1$}
\put(8,8){\line(-1,-2){3}}
\put(4.5,1.5){$C_2$}
\put(6.5,7){\line(1,0){6}}
\put(13,7){$B_2$}
\put(6.5,9){\line(1,-2){4.5}}
\put(6,9){$A_5$}
\put(7,0){\line(-1,2){2.5}}
\put(8,0){\line(-1,2){3}}
\put(4,5){$A_1$}
\put(4.5,6){$A_2$}
\put(8,8){\circle*{0.5}}
\put(9,8){\circle*{0.5}}
\put(10,8){\circle*{0.5}}
\put(11,8){\circle*{0.5}}
\put(6.5,7){\circle*{0.5}}
\put(6.5,7){\line(-1,-2){1.5}}
\put(6,8){\line(1,-2){4}}
\put(5.5,8){$A_4$}
\put(7.5,7){\circle*{0.5}}
\put(8.5,7){\circle*{0.5}}
\put(9.5,7){\circle*{0.5}}
\put(10.5,7){\circle*{0.5}}
\put(11.5,7){\circle*{0.5}}
\put(6,6){\circle*{0.5}}
\put(7,6){\circle*{0.5}}
\put(8,6){\circle*{0.5}}
\put(9,6){\circle*{0.5}}
\put(10,6){\circle*{0.5}}
\put(11,6){\circle*{0.5}}
\put(12,6){\circle*{0.5}}
\put(5.5,5){\circle*{0.5}}
\put(6.5,5){\circle*{0.5}}
\put(7.5,5){\circle*{0.5}}
\put(8.5,5){\circle*{0.5}}
\put(9.5,5){\circle*{0.5}}
\put(10.5,5){\circle*{0.5}}
\put(11.5,5){\circle*{0.5}}
\put(12.5,5){\circle*{0.5}}
\put(5,4){\circle*{0.5}}
\put(5,4){\line(1,-2){2}}
\put(6,4){\circle*{0.5}}
\put(7,4){\circle*{0.5}}
\put(8,4){\circle*{0.5}}
\put(9,4){\circle*{0.5}}
\put(10,4){\circle*{0.5}}
\put(11,4){\circle*{0.5}}
\put(12,4){\circle*{0.5}}
\put(13,4){\circle*{0.5}}
\put(5.5,3){\circle*{0.5}}
\put(6.5,3){\circle*{0.5}}
\put(7.5,3){\circle*{0.5}}
\put(8.5,3){\circle*{0.5}}
\put(9.5,3){\circle*{0.5}}
\put(10.5,3){\circle*{0.5}}
\put(11.5,3){\circle*{0.5}}
\put(12.5,3){\circle*{0.5}}
\put(6,2){\circle*{0.5}}
\put(7,2){\circle*{0.5}}
\put(8,2){\circle*{0.5}}
\put(9,2){\circle*{0.5}}
\put(10,2){\circle*{0.5}}
\put(11,2){\circle*{0.5}}
\put(12,2){\circle*{0.5}}
\put(6.5,1){\circle*{0.5}}
\put(7.5,1){\circle*{0.5}}
\put(8.5,1){\circle*{0.5}}
\put(9.5,1){\circle*{0.5}}
\put(10.5,1){\circle*{0.5}}
\put(11.5,1){\circle*{0.5}}
\put(7,0){\circle*{0.5}}
\put(8,0){\circle*{0.5}}
\put(9,0){\circle*{0.5}}
\put(10,0){\circle*{0.5}}
\put(11,0){\circle*{0.5}}
\end{picture}
\end{minipage}
\caption{Hexagonal grid}
\end{figure}
\paragraph{}
In this subsection, we give an improved upper bound for $bp_{\{2,3\}}(K_n)$. This improves on the previous bound of $3\sqrt{2}\sqrt{n}$ by Cioab{\u{a}} and Tait \cite{cioabua2013variations}.
\paragraph{}
For $L=\{2,3\}$, order the $n$ vertices into a hexagonal grid with $m$ vertices on each side as in Figure 1. A hexagonal grid has three sides that are pairwise non-parallel. The rows parallel to each such side of the grid are denoted by $A_i$, $B_i$, and $C_i$ for $1 \leq i \leq 2m$ respectively. Consider the following collection of complete bipartite graphs with parts $A_i$ and $\cup_{j=i+1}^{2m} A_j$, the collection of complete bipartite graphs with parts $B_i$ and $\cup_{j=i+1}^{2m} B_j$ and the collection of complete bipartite graphs with parts $C_i$ and $\cup_{j=i+1}^{2m} C_j$. This forms a $\{2,3\}$ covering of $K_n$ since the edge whose vertices both lie in some row that is parallel to the sides of the hexagonal grid is covered exactly twice, and the rest of the vertices are covered thrice.
\paragraph{}
The total number of vertices in a hexagonal grid is given by
\begin{equation*}
    \begin{split}
    n&=2(m)+2(m+1)+\cdots+2(m+m-2)+(m+m-1)\\
    &=2[m(m-1)+(m-2)(m-1)/2]+(m+m-1)\\
    &=2m^2-2m+m^2-3m+2+2m-1\\
    &=3m^2-3m+1
    \end{split}
\end{equation*}
\paragraph{}
So for $n=3(m^2-m+1)$, $bp_{\{2,3\}}(K_n) \leq 6m-3 < 2 \sqrt{3} \sqrt{n}$.
\subsection{Constructive Upper bound for $f_3(n,4)$}
\begin{figure}[ht]
\centering
\begin{minipage}{5 in}
\setlength{\unitlength}{0.8cm}
\begin{picture}(10,9)
\thicklines
\put(4,9){$R_1$}
\put(6,11){$C_1$}
\put(11,8){$M_1$}
\put(11,7){$M_2$}
\put(8.5,3.5){$N_1$}
\put(7.5,3.5){$N_2$}
\put(11,6){\line(-1,-1){2}}
\put(10,6){\line(-1,-1){2}}
\put(5,9){\line(1,0){5}}
\put(6,10){\line(0,-1){5}}
\put(9,10){\line(1,-1){2}}
\put(8,10){\line(1,-1){3}}
\put(6,5){\circle*{0.5}}
\put(7,5){\circle*{0.5}}
\put(8,5){\circle*{0.5}}
\put(9,5){\circle*{0.5}}
\put(10,5){\circle*{0.5}}
\put(6,6){\circle*{0.5}}
\put(7,6){\circle*{0.5}}
\put(8,6){\circle*{0.5}}
\put(9,6){\circle*{0.5}}
\put(10,6){\circle*{0.5}}
\put(6,7){\circle*{0.5}}
\put(7,7){\circle*{0.5}}
\put(8,7){\circle*{0.5}}
\put(9,7){\circle*{0.5}}
\put(10,7){\circle*{0.5}}
\put(6,8){\circle*{0.5}}
\put(7,8){\circle*{0.5}}
\put(8,8){\circle*{0.5}}
\put(9,8){\circle*{0.5}}
\put(10,8){\circle*{0.5}}
\put(6,9){\circle*{0.5}}
\put(7,9){\circle*{0.5}}
\put(8,9){\circle*{0.5}}
\put(9,9){\circle*{0.5}}
\put(10,9){\circle*{0.5}}
\end{picture}
\end{minipage}
\caption{Square grid}
\end{figure}
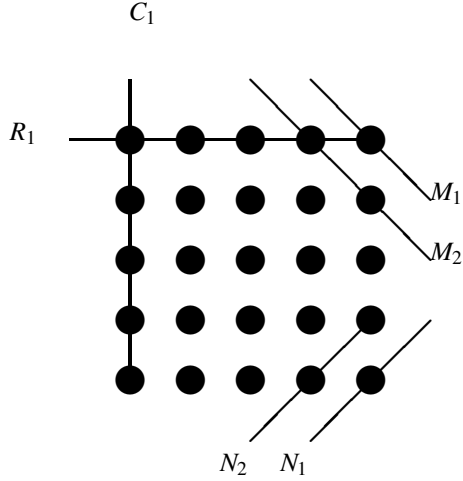

\paragraph{}
In this subsection, we give a constructive bound for $f_3(n,4)$. This bound is better than the general bound described in \cite{babu2021multicovering}.
\paragraph{}
For $L=\{1,2,3,4\}$ order the $n$ vertices into a square grid with $m$ rows and $m$ columns as in Figure 2. The rows are denoted by $R_i$ for $1 \leq i \leq m$ and the columns are denoted by $C_i$ for $1 \leq i \leq m$. The \emph{diagonal rows} are the lines that are parallel to the main diagonal of the square grid. The \emph{counter diagonal rows} are the lines that are parallel to the counter diagonal. The diagonal rows are denoted by $M_i$ and the counter diagonal rows are denoted by $N_i$ for $1 \leq i \leq 2m-1$. Some of the different types of rows in the square grid are depicted in Figure 2. Consider the following collection of complete $3$-partite $3$-graphs.
\begin{enumerate}
    \item Complete $3$-partite $3$-graphs with parts $R_i$, $\cup_{j=i+1}^m R_j$ and $\cup_{k=1}^{i-1} R_k$ for $2 \leq i \leq m-1$.
    \item Complete $3$-partite $3$-graphs with parts
    $C_i$, $\cup_{j=i+1}^m C_j$ and $\cup_{k=1}^{i-1} C_k$ for $2 \leq i \leq m-1$.
    \item Complete $3$-partite $3$-graphs with parts $M_i$, $\cup_{j=i+1}^m M_j$ and $\cup_{k=1}^{i-1} M_k$ for $2 \leq i \leq 2m-2$.
    \item Complete $3$-partite $3$-graphs with parts
    $N_i$, $\cup_{j=i+1}^m N_j$ and $\cup_{k=1}^{i-1} N_k$ for $2 \leq i \leq 2m-2$.
\end{enumerate}
\paragraph{}
Observe that for any three vertices in the square grid, there is always a row or a column or a diagonal row or a counter-diagonal row that passes through one of these vertices such that the other two vertices are on either side of it. The number of occurrences of an edge in the collection of $3$-partite $3$-graphs depends upon the number of such lines that pass through one of the vertices such that the other two vertices lie on either side of it. 
\paragraph{}
There are $n=m^2$ vertices in the square grid. The number of complete $3$-partite $3$-graphs is given by $$m-2 + m-2 + 2m-3 + 2m-3 = 6m-10.$$
\paragraph{}
So for $n=m^2$, $f_3(n,4)\leq 6m-10 \leq 6\sqrt{n}$. This shows that the general upper bound in \cite{babu2021multicovering} is not tight.

\section{Cover Order and the Chromatic Number}
\paragraph{}
In this section, we provide the extension of the generalization of the Katona-Szemer\'edi theorem by Mubayi and Vishwanathan \cite{mubayi2009bipartite} to $r$-uniform hypergraphs.
\paragraph{}
A hypergraph vertex coloring assigns $s$ colors to the vertices of the hypergraph. Such a coloring of a hypergraph $H$ is said to be a proper coloring if every edge in the edge set of the hypergraph $H$ contains at least two vertices with distinct colors. A hypergraph $H$ is said to be $k$-colorable if there exists a proper coloring of the hypergraph $H$ using at most $k$ distinct colors. The chromatic number of a hypergraph $H$, denoted by $\chi(H)$ is the smallest $k$ for which the hypergraph $H$ is $k$-colorable.
\paragraph{}
The minimum number of bipartite graphs required to cover the edge set of any graph $H$ with chromatic number $\chi(H)$ is $\lceil \log \chi(H) \rceil$ \cite{harary1977biparticity}. This result is part of folklore. One of the classical theorems that study an associated problem is the Katona-Szemer\'edi theorem \cite{katona1967problem}. The Katona-Szemer\'edi theorem \cite{katona1967problem} states that the minimum of the sum of the orders of a collection of complete bipartite graphs that cover the edge set of a complete graph on $n$ vertices is $n \log n$. Hansel \cite{hansel1964nombre}  provides an alternate proof for the Katona-Szemer\'edi theorem. Mubayi and Vishwanathan \cite{mubayi2009bipartite} provide a generalization of the theorem by showing that the sum of the orders of any collection of complete bipartite graphs that cover the edge set of $G$ is at least $k \log k - k \log \log k - k \log \log \log k$ for any graph $G$ with chromatic number $k$. This theorem is extended to $r$-uniform hypergraphs by using Jensen's inequality. Jensen's inequality states that,

\begin{lemma} \cite{jensen1906fonctions}
Let ${f}$ be a convex function of one real variable. Let $x_1,\dots,x_n\in \mathbb{R}$ and let $c_1,\dots, c_n\geq 0$ satisfy $c_1+\dots+c_n=1$. Then
$f(c_1x_1+\cdots+c_nx_n) \leq c_1f(x_1)+\cdots+c_nf(x_n)$.
\end{lemma}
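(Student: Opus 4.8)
The plan is to prove the inequality by induction on $n$, the number of points, using convexity only in its two-point form. For the base case, $n=1$ forces $c_1=1$ and both sides equal $f(x_1)$; the case $n=2$ is exactly the defining property of a convex function, namely $f(\lambda a+(1-\lambda)b)\le \lambda f(a)+(1-\lambda)f(b)$ for every $\lambda\in[0,1]$. So the real content is the inductive step.

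Assume the statement holds for any collection of $n-1$ reals with nonnegative weights summing to $1$, and take $x_1,\dots,x_n$ and $c_1,\dots,c_n\ge 0$ with $\sum_{i=1}^n c_i=1$. First dispose of the degenerate case $c_n=1$ (equivalently, all other weights vanish), where the claim is trivial. Otherwise $1-c_n>0$, so I would set $d_i=c_i/(1-c_n)$ for $1\le i\le n-1$, note $d_i\ge 0$ and $\sum_{i=1}^{n-1}d_i=\frac{1}{1-c_n}\sum_{i=1}^{n-1}c_i=1$, and rewrite the convex combination as a two-term combination
\[
\sum_{i=1}^n c_i x_i=(1-c_n)\Big(\sum_{i=1}^{n-1}d_i x_i\Big)+c_n x_n .
\]
Applying the $n=2$ case (convexity of $f$) with $\lambda=1-c_n$ gives
\[
f\Big(\sum_{i=1}^n c_i x_i\Big)\le (1-c_n)\,f\Big(\sum_{i=1}^{n-1}d_i x_i\Big)+c_n f(x_n),
\]
and then the inductive hypothesis applied to the weights $d_1,\dots,d_{n-1}$ yields $f\big(\sum_{i=1}^{n-1}d_i x_i\big)\le \sum_{i=1}^{n-1}d_i f(x_i)$. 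Substituting this in and unwinding $d_i=c_i/(1-c_n)$ collapses the $(1-c_n)$ factors, producing $\sum_{i=1}^{n-1}c_i f(x_i)+c_n f(x_n)=\sum_{i=1}^n c_i f(x_i)$, which is the desired bound.

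There is no deep obstacle here; the only care needed is bookkeeping around degenerate weights — checking $1-c_n\neq 0$ before dividing, and noting that if several $c_i$ equal $0$ the argument still goes through (one may simply discard zero-weight terms at the outset and induct on the number of strictly positive weights). If a non-inductive argument were preferred, a supporting-line proof also works: set $m=\sum_i c_i x_i$, choose an affine function $\ell(x)=f(m)+s(x-m)$ lying below $f$ (a tangent line when $f$ is differentiable, or a subgradient line in general), evaluate $\ell(x_i)\le f(x_i)$, multiply by $c_i$ and sum to get $f(m)=\ell(m)=\sum_i c_i \ell(x_i)\le \sum_i c_i f(x_i)$. The inductive version is cleaner and needs no smoothness hypothesis, so I would present that as the main proof.
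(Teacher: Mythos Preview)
Your inductive argument is correct and is the standard proof of Jensen's inequality; the alternative supporting-line argument you sketch is also valid. The paper itself does not prove this lemma at all---it simply states it with a citation to Jensen's original work and then applies it, so there is no ``paper's own proof'' to compare against.
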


\begin{lemma}\label{lemma KS}
Let $H$ be an $r$-uniform hypergraph with $n$ vertices with independence number $\alpha$. Then the sum of the orders of any collection of complete $r$-partite $r$-graphs that cover the edge set of $H$, $b_r(H) \geq (r-1) n \ln (\frac{n}{\alpha})$.
\end{lemma}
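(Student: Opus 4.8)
The plan is to adapt Hansel's probabilistic proof of the Katona--Szemer\'edi theorem to the $r$-uniform setting. Fix an arbitrary covering of $E(H)$ by complete $r$-partite $r$-graphs $B_1,\dots,B_t$, where $B_i$ has parts $X_i^1,\dots,X_i^r$, and for $v\in V(H)$ let $d(v)$ be the number of indices $i$ with $v\in V(B_i)$. Since the parts of $B_i$ are pairwise disjoint, each vertex of $B_i$ lies in exactly one part, so the sum of the orders of $B_1,\dots,B_t$ equals $\sum_{i=1}^{t}|V(B_i)|$, and counting incidences this equals $\sum_{v\in V(H)}d(v)$. Hence it is enough to show $\sum_{v}d(v)\ge (r-1)\,n\log(n/\alpha)$ for every such covering; minimizing over coverings then gives the stated bound on $b_r(H)$.

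The heart of the argument is a random ``killing'' process. For each $i\in\{1,\dots,t\}$ independently, choose a part index $\sigma_i\in\{1,\dots,r\}$ uniformly at random and delete every vertex of the part $X_i^{\sigma_i}$; let $S\subseteq V(H)$ be the (random) set of vertices deleted by no $B_i$. I would first check that $S$ is always an independent set of $H$: if $e\in E(H)$, pick $i$ such that $e$ is an edge of $B_i$; as $B_i$ is complete $r$-partite, $e$ has exactly one vertex in each of $X_i^1,\dots,X_i^r$, in particular one vertex $u\in X_i^{\sigma_i}$, and $u$ is deleted, so $e\not\subseteq S$. Thus $|S|\le\alpha$ with probability $1$, hence $\mathbb{E}|S|\le\alpha$. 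On the other hand, a vertex $v$ survives $B_i$ (for $i$ with $v\in V(B_i)$) exactly when $\sigma_i$ avoids the index of the part of $B_i$ containing $v$, an event of probability $\tfrac{r-1}{r}$, and these events are independent over $i$; hence $\Pr[v\in S]=\bigl(\tfrac{r-1}{r}\bigr)^{d(v)}$, and by linearity of expectation $\sum_{v}\bigl(\tfrac{r-1}{r}\bigr)^{d(v)}=\mathbb{E}|S|\le\alpha$.

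To conclude, put $\bar d=\tfrac1n\sum_v d(v)$ and apply Jensen's inequality (the lemma above) to the convex function $x\mapsto\bigl(\tfrac{r-1}{r}\bigr)^{x}$ with uniform weights $\tfrac1n$, obtaining $\bigl(\tfrac{r-1}{r}\bigr)^{\bar d}\le\tfrac1n\sum_v\bigl(\tfrac{r-1}{r}\bigr)^{d(v)}\le\tfrac{\alpha}{n}$. Taking logarithms (note $\log\tfrac{r-1}{r}<0$) gives $\bar d\ge \frac{\log(n/\alpha)}{\log(r/(r-1))}$, and since $\log(1+1/(r-1))\le 1/(r-1)$ this yields $\bar d\ge (r-1)\log(n/\alpha)$. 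Therefore the sum of the orders of this covering is $n\bar d\ge (r-1)\,n\log(n/\alpha)$, and since the covering was arbitrary, $b_r(H)\ge (r-1)\,n\log(n/\alpha)$. Specializing to $r=2$ and $\alpha=1$ recovers the $n\log n$ lower bound of Katona and Szemer\'edi.

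I expect the step needing the most care to be the verification that $S$ is independent: this is exactly where the definition of a complete $r$-partite $r$-graph is used, ensuring that every edge of $H$ meets each part of its covering $B_i$ in a single vertex, so that deleting one whole part removes at least one vertex of that edge. The other deliberate choice is to use a process that deletes one \emph{entire} part (rather than retaining one entire part): this is what makes a vertex survive each incident $B_i$ with the comparatively large probability $\tfrac{r-1}{r}$, which is precisely what puts $\bar d$ itself into the exponent after Jensen; a coarser process would lose a multiplicative factor. The final replacement of $1/\log(r/(r-1))$ by $r-1$ is just the elementary estimate $\log(1+x)\le x$ (valid for the natural logarithm), and is where the clean constant $r-1$ comes from.
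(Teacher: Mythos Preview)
Your proof is correct and follows essentially the same approach as the paper: the random deletion of one part from each covering $r$-partite $r$-graph, the observation that the survivors form an independent set (hence expected size at most $\alpha$), Jensen's inequality applied to $x\mapsto\bigl(\tfrac{r-1}{r}\bigr)^x$, and the final estimate $\log\bigl(1+\tfrac{1}{r-1}\bigr)\le\tfrac{1}{r-1}$ all appear in the paper's argument in the same order. Your write-up is in fact somewhat more careful in justifying why the surviving set is independent.
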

\begin{proof}
Consider a collection of complete $r$-partite $r$-graphs that cover the edge set of the $r$-uniform hypergraph. Let the vertex $i$ be present in $a_i$ complete $r$-partite $r$-graphs in the collection. Uniformly at random, pick one part each from every complete $r$-partite $r$-graph and remove all the vertices present in the part. The probability that a vertex $i$ is not removed from any of the complete $r$-parite $r$-graphs is $(\frac{r-1}{r})^{a_i}$. Hence, the expected size of the resulting subset of vertices is $\sum_{i=1}^n (1-\frac{1}{r})^{a_i}$. Note that this is an independent set. Since the independence number is $\alpha$, we have

\begin{equation}\label{eq:4.1}
    \begin{split}
        &\sum_{i=1}^n \bigg[1-\frac{1}{r}\bigg]^{a_i} \leq \alpha \\
    \end{split}
\end{equation}
$F(x)=(1-\frac{1}{r})^{x}$ is a convex function for fixed $r$, since, $F'(x)=-(1+\frac{1}{r-1})^{-x}\ln(1+\frac{1}{r-1})$ and $F''(x)=(1+\frac{1}{r-1})^{-x}\ln^2(1+\frac{1}{r-1})) > 0$. Applying Jensen's Inequality with $c_1=c_2=\cdots=c_n=\frac{1}{n}$ and since $\sum_{i=1}^n a_i=b_r(H)$ we have,
\begin{equation}\label{eq:4.2}
    \begin{split}
        \frac{1}{[1+\frac{1}{r-1}]^\frac{\sum_{i=1}^n a_i}{n}} &\leq \sum_{i=1}^n \frac{1}{n\cdot [1+\frac{1}{r-1}]^{a_i}}\\
        \frac{n}{[1+\frac{1}{r-1}]^\frac{\sum_{i=1}^n a_i}{n}} &\leq \sum_{i=1}^n \frac{1}{[1+\frac{1}{r-1}]^{a_i}}\\
         n \bigg[1-\frac{1}{r}\bigg]^{\frac{b_r(H)}{n}} &\leq \sum_{i=1}^n \frac{1}{[1+\frac{1}{r-1}]^{a_i}}=\sum_{i=1}^n \bigg[1-\frac{1}{r}\bigg]^{a_i}\\
    \end{split}
\end{equation}
From equations (\ref{eq:4.1}) and (\ref{eq:4.2}) we have
\begin{equation*}
    \begin{split}
        n \bigg[1-\frac{1}{r}\bigg]^{\frac{b_r(H)}{n}} &\leq \alpha
    \end{split}
\end{equation*}
Taking natural logarithm and using $(1+\frac{1}{x})^x \leq e$ we have,
\begin{equation*}
    \begin{split}
    \ln n &\leq \ln \alpha + \frac{b_r(H)}{n}\ln (\frac{r}{r-1})\\
    n\ln n- n \ln \alpha &\leq b_r(H)\ln (1+\frac{1}{r-1})\\
    n\ln n- n \ln \alpha &\leq b_r(H)\cdot \frac{1}{r-1}\\
    b_r(H) &\geq (r-1)n\ln (\frac{n}{\alpha})\\
    \end{split}
\end{equation*}
\end{proof}
\paragraph{}
Equivalently,
$$\alpha \geq \frac{n}{e^{\frac{b_r(H)}{(r-1)n}}}$$

\begin{theorem}
Let $H=(V, E)$ be an $r$-uniform hypergraph on $n$ vertices with chromatic number $k$, where $k$ is sufficiently large. The sum of the orders of any collection of complete $r$-partite $r$-graphs that cover the edge set of $H$ is at least $(r-1)^2 k \ln k(1-o(1))$.\\
\end{theorem}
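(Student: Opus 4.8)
The plan is to adapt the Mubayi--Vishwanathan peeling argument to $r$-graphs, feeding it the independence-number estimate inside Lemma~\ref{lemma KS} at every stage. Fix a cover of $E(H)$ by complete $r$-partite $r$-graphs whose orders sum to $b:=b_r(H)$ and set $\beta:=b/(r-1)$. Build a nested sequence $H=H_0\supseteq H_1\supseteq\cdots$ of induced sub-hypergraphs: if $H_i$ has $n_i$ vertices, then the given cover restricted to $V(H_i)$ still covers $E(H_i)$ and has total order at most $b$, so the random part-deletion plus Jensen computation of Lemma~\ref{lemma KS} produces an independent set of $H_i$ of size at least $n_i\,2^{-\beta/n_i}$; since in addition every set of at most $r-1$ vertices is independent in an $r$-graph, we may choose an independent set $I_{i+1}\subseteq V(H_i)$ with
\[
|I_{i+1}|\ \ge\ \max\!\Bigl(r-1,\ \bigl\lceil n_i\,2^{-\beta/n_i}\bigr\rceil\Bigr),
\]
and put $H_{i+1}:=H_i-I_{i+1}$. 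Stop at the first index $t$ with $n_t<n^{\ast}$, where $n^{\ast}$ is the unique solution of $n^{\ast}2^{-\beta/n^{\ast}}=r-1$. Each $I_j$ is independent in $H$ (an edge of $H$ lying inside $I_j$ would be an edge of $H_{j-1}$ inside $I_j$, contradicting independence of $I_j$ in $H_{j-1}$), so $I_1,\dots,I_t$ together with any proper colouring of $H_t$ give a proper colouring of $H$; combining this with the crude bound $\chi(H_t)\le\lceil n_t/(r-1)\rceil$ (colour classes of size $\le r-1$) yields
\[
k=\chi(H)\ \le\ t+\Bigl\lceil\frac{n_t}{r-1}\Bigr\rceil\ \le\ t+\frac{n^{\ast}}{r-1}+1 .
\]

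The next step is to bound $t$. Here one uses that $n^{\ast}=(1+o(1))\beta/\log_2\beta$, and, writing $L:=\beta/n^{\ast}=\log_2\!\bigl(n^{\ast}/(r-1)\bigr)$, that $2^{L}=n^{\ast}/(r-1)$. The process splits into three regimes. If $n_0>\beta$, a single step already gives $n_1\le n_0\bigl(1-2^{-\beta/n_0}\bigr)\le\beta\ln 2<\beta$, using $1-2^{-x}\le x\ln 2$. While $n^{\ast}\le n_i\le\beta$, writing $n_i\in[\beta/(j+1),\beta/j]$ one removes at least $(\beta/(j+1))2^{-(j+1)}$ vertices per step, so the number of steps with $n_i$ in that window is at most $\tfrac{\beta/(j(j+1))}{(\beta/(j+1))2^{-(j+1)}}=2^{j+1}/j$; summing over $j$ up to $L$ gives a geometric-type sum dominated by its last term, hence $O(2^{L}/L)=O\!\bigl(n^{\ast}/((r-1)L)\bigr)$. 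For $k$ large, $L$ must itself be large (otherwise $n^{\ast}$, and then by the previous display $k$, would be bounded), so this middle regime together with the single initial step accounts for $o\!\bigl(n^{\ast}/(r-1)\bigr)$ steps; thus $t=o\!\bigl(n^{\ast}/(r-1)\bigr)$ and therefore
\[
k\ \le\ \frac{n^{\ast}}{r-1}\,(1+o(1)).
\]

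Finally, rewrite $n^{\ast}2^{-\beta/n^{\ast}}=r-1$ as $\beta=n^{\ast}\log_2\!\bigl(n^{\ast}/(r-1)\bigr)$. The last display gives $n^{\ast}\ge(1-o(1))(r-1)k$, hence $n^{\ast}/(r-1)\ge(1-o(1))k$, and substituting into both factors,
\[
\beta\ \ge\ (1-o(1))(r-1)k\cdot\log_2\!\bigl((1-o(1))k\bigr)\ =\ (1-o(1))(r-1)k\log_2 k ,
\]
so $b=(r-1)\beta\ \ge\ (1-o(1))(r-1)^{2}k\log_2 k$, which is the assertion (with $\log$ to base $2$). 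The remaining loose ends are routine: one checks $\beta\to\infty$ as $k\to\infty$ (else $n^{\ast}$, hence $k$, stays bounded), and a one-line bootstrap confirms $\log_2\beta=(1+o(1))\log_2 k$, which is all that is used above. I expect the genuine obstacle to be precisely this three-regime accounting of $t$: one must verify that the ``geometric'' middle regime and the crude final regime — where only $r-1$ vertices are peeled off per step — together contribute $(1+o(1))\,n^{\ast}/(r-1)$ colours, so that no constant factor is lost against the target $(r-1)^{2}k\log k$.
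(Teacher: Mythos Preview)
Your argument is correct and takes a genuinely different route from the paper's. Both proofs feed Lemma~\ref{lemma KS} into a peeling process that removes large independent sets, thereby upper-bounding $\chi(H)$ in terms of $b_r(H)$. The paper, however, peels only until $n_i$ falls below $(r-1)k$ and then branches into two cases according to whether $t\ge k/\log k$: in the easy case the peeling is slow enough to read off the bound directly, while in the hard case the chromatic number has barely dropped, and a greedy recolouring of the surviving hypergraph $H'$ exhibits a subhypergraph $H''$ spanned by many colour classes of size exactly $r-1$, with independence number at most $2(r-1)$, to which Lemma~\ref{lemma KS} is applied a second time. You instead fix one cover throughout, peel all the way down to the $\beta$-dependent threshold $n^\ast$, replace the greedy-recolouring step by the trivial estimate $\chi(H_t)\le\lceil n_t/(r-1)\rceil$, and make up for this by your windowed geometric sum showing $t=o(n^\ast/(r-1))$; that sum is indeed $O(2^L/L)+O(L)=o(n^\ast/(r-1))$ once $L\to\infty$, and your bootstrap forcing $L\to\infty$ from $k\to\infty$ is sound. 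The trade-off is that the paper's structural detour produces an explicit lower-order term, namely $(r-1)^2k(\log k-\log\log k-O(1))$, whereas your $o(1)$ remains implicit; on the other hand your version has no case split, no recolouring trick, and makes transparent that a single fixed cover suffices at every stage.
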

\begin{proof}
Let the chromatic number of $H$, $\chi(H)=k$. If $n$ itself is  larger than $(r-1)^2 k\ln k$, we are done. Hence we may assume that $n$  is less than $(r-1)^2 k \ln k$. Let $H=H_0$. Starting with $H_0$, repeatedly remove independent sets of size given by Lemma \ref{lemma KS}, as long as the number of vertices is at least $(r-1)k$. Let $H_i$ denote the $i$-th graph in the sequence, and let $H_t$ denote the last graph in the sequence. Let $|V(H_i)|=n_i$ and $(1-\frac{1}{\beta})$ denote the maximum rate at which $n_i$ fall, over all $i$. That is, $\beta=max_i\bigg[e^{\frac{b_r(H_i)}{(r-1)n_i}}\bigg]$. Let this maximum be achieved for $i=p$. From the definition, we see that $n_{i+1} \leq n_i-\frac{n_i}{e^{\frac{b_r(H_i)}{(r-1)n_i}}}=n_i\Bigg[1-\frac{1}{e^{\frac{b_r(H_i)}{(r-1)n_i}}}\Bigg]$. Hence $n_t \leq n\big(1-\frac{1}{\beta}\big)^t < n e^{-\frac{t}{\beta}}$. Using the fact that $n_t \geq (r-1)k$ and $n<(r-1)^2 k\ln k$, we obtain $(r-1)k < n e^{-\frac{t}{\beta}}$ that is $t < \beta \ln \big[\frac{n}{(r-1)k}\big] < \beta \ln[(r-1)\ln k]$.
\paragraph{}
We consider two cases. First, suppose that $t$ is at least $\frac{k}{\ln k}$. Then, from the above two inequalities, we have $\frac{k}{\ln k} \leq e^{\frac{b_r(H_p)}{(r-1)n_p}} \ln[(r-1)\ln k]$. Taking natural logarithm, we get
\begin{equation*}
    \begin{split}
        \ln k - \ln \ln k &\leq \frac{b_r(H_p)}{(r-1)n_p}+ \ln \ln[(r-1)\ln k]\\
        b_r(H_p) &\geq (r-1)n_p\bigg[\ln k - \ln \ln k - \ln \big\{\ln (r-1) + \ln \ln k\big\}\bigg]\\
    \end{split}
\end{equation*}
Using the facts that $n_p>(r-1)k$ we have,
\begin{equation*}
    \begin{split}
        b_r(H_p)\geq (r-1)^2 k \bigg[\ln k-\ln \ln k-\ln \big \{\ln (r-1) + \ln \ln k \big \}\bigg]
    \end{split}
\end{equation*}

We now consider the case that $t$ is less than $\frac{k}{\ln k}$. Let $    H'$ be the hypergraph obtained after removing an independent set from $H_t$ as mentioned in Lemma \ref{lemma KS}. By definition of $t$, we have $|V(H')|<(r-1)k$. Also $\chi(H') \geq k - \frac{k}{\ln k}$.

Consider an optimal coloring of the hypergraph $H'$. Order the vertices such that the vertices in color class $i$ precede the vertices in color class ($i+1$). Let $C$ denote a greedy coloring of the vertices of $H'$ considered in this order. Note that $C$ has at most one color class with strictly less than ($r-1$) vertices since we color $H'$ greedily.

\paragraph{}
Let $p$ be the number of vertices present in the set of ($r-1$)-sized color classes and $q$ be the rest of the elements. Counting the number of elements in $H'$ we have,

\begin{equation*}
    \begin{split}
        p+q<(r-1)k
    \end{split}
\end{equation*}

Counting the number of color classes we have,

\begin{equation*}
    \begin{split}
        1+\frac{p}{r-1}+\frac{q}{r}\geq \chi(H')\geq k - \frac{k}{\ln k}
    \end{split}
\end{equation*}
From the above two inequalities, we have
\begin{equation*}
    \begin{split}
        1+\frac{p}{r-1}+\frac{(r-1)k-p}{r}&\geq k- \frac{k}{\ln k}\\
        1+\frac{p}{r(r-1)}+\bigg(1-\frac{1}{r}\bigg)k&\geq k- \frac{k}{\ln k}\\
        1+\frac{p}{r(r-1)}&\geq \frac{k}{r}-\frac{k}{\ln k}\\
        r(r-1)+p&\geq (r-1)k-\frac{r(r-1)k}{\ln k}\\
        p &\geq (r-1)k \bigg[1-\frac{r}{\ln k}\bigg]-r(r-1)\\
    \end{split}
\end{equation*}

Consider the sub-hypergraph $H''$ spanned by the vertices of the ($r-1$)-sized color classes in $H'$. Note that the color classes are ordered. Let $a_{i,j}$ be the $i$-th vertex in the $j$-th ($r-1$)-sized color class. Note that each vertex $a_{i,j}$ forms an edge with the $k$-th ($r-1$) sized color class for all $k$ where $k<j$ since coloring $C$ is a greedy coloring. Note that $H''$ has $\frac{p}{r-1}$ color classes in the optimal coloring. Suppose $H''$ has an independent set of size greater than $2(r-1)$, then color that independent set with one color and color the rest of the vertices with strictly less than $\frac{p}{r-1}-2$ colors, contradicting the assumption that $C$ is an optimal coloring. Therefore, the size of the independent set in $H''$ is at most $2(r-1)$. Applying lemma \ref{lemma KS} for $H''$, we have 
\begin{equation*}
    \begin{split}
        b_r(H'')&\geq (r-1)p\ln\bigg[\frac{p}{2(r-1)}\bigg]\\
    \end{split}
\end{equation*}
Since $p \geq (r-1)k \big[1-\frac{r}{\ln k}\big]-r(r-1)$ we have,

\begin{equation*}
    \begin{split}
        p &\geq (r-1)k- r(r-1)\frac{k}{\ln k}-r(r-1)\\
        & \geq (r-1)k-r(r-1)\bigg[\frac{k}{\ln k}+1\bigg]\\
        &\geq (r-1)k-\frac{2r(r-1)k}{\ln k}\\
    \end{split}
\end{equation*}
Therefore,
\begin{equation*}
    \begin{split}
         b_r(H'')&\geq (r-1)p\ln\bigg[\frac{p}{2(r-1)}\bigg]\\
         &\geq (r-1)\bigg[(r-1)k-\frac{2r(r-1)k}{\ln k}\bigg]\ln\bigg[\frac{k}{2}-\frac{rk}{\ln k}\bigg]\\
         &\geq \bigg[(r-1)^2k-\frac{2r(r-1)^2k}{\ln k} \bigg]\ln \bigg[\frac{k}{2}-\frac{rk}{\ln k}\bigg]\\
         &\geq (r-1)^2k \ln \bigg[\frac{k}{2}-\frac{rk}{\ln k}\bigg]-\frac{2r(r-1)^2k}{\ln k}\ln\bigg[\frac{k}{2}-\frac{rk}{\ln k}\bigg]\\
         &\geq (r-1)^2k \ln \bigg[\frac{k}{2}-\frac{rk}{\ln k}\bigg]-\frac{2r(r-1)^2k}{\ln k}\ln\bigg[\frac{k}{2}\bigg]\\
         &\geq (r-1)^2k \ln \bigg[k\bigg(\frac{1}{2}-\frac{r}{\ln k}\bigg)\bigg]-2r(r-1)^2k\\
         &\geq (r-1)^2k \bigg(\ln k+ \ln \bigg(\frac{\ln k-2r}{2\ln k}\bigg)\bigg)-2r(r-1)^2k\\    
         &\geq (r-1)^2k \bigg(\ln k -\ln \ln k-1\bigg)-2r(r-1)^2k\\
         &\geq (r-1)^2k \bigg(\ln k -\ln \ln k-1 -2r\bigg)\\
    \end{split}
\end{equation*}
\end{proof}
\bibliographystyle{plain}
\bibliography{reference}
\end{document}